\newtheorem{lemma}{Lemma}
\newtheorem{theorem}{Theorem}
\newtheorem{crlly}{Corollary}
\newtheorem{eg}{Example}
\title[Growth of Solutions of Second Order Linear Differential Equations]{Growth of Solutions of Second Order Linear Differential Equations}
\author[S. Kumar, N. Mehra and M. Saini]{Sanjay Kumar, Naveen Mehra and Manisha Saini}
\address{Sanjay Kumar, Department of Mathematics, Deen Dayal Upadhyaya College, University of Delhi, Delhi-110007, India}
\email{skpant@ddu.du.ac.in}
\address{Naveen Mehra, Department of Mathematics, Kumaun University, S.S.J Campus, Almora-263601, Uttarakhand, India}
\email{naveenmehra00@gmail.com}
\address{Manisha Saini, Department of Mathematics, University of Delhi, Delhi-110007, India}
\email{sainimanisha210@gmail.com}
\subjclass[2010]{34M10, 30D35}
\keywords{entire function, meromorphic function, lower order of growth, order of growth, complex differential equation, critical rays, blows up exponentially and decays to zero exponentially.}
\thanks{The research work of the second author is supported by research fellowship from Council of Scientific and Industrial Research (CSIR), New Delhi and of the last author from University Grants Commission (UGC), New Delhi.} 
\begin{document}
\maketitle
\begin{abstract}
In this paper, we will prove that all non-trivial solutions of $f''+A(z)f'+B(z)f=0$ are of infinite order, where we have some restrictions on entire functions $A(z)$ and $B(z)$.
\end{abstract}

\section{Introduction and statement of main results}
Consider the differential equation,
\begin{equation}\label{eq1}
f''+A(z)f'+B(z)f=0
\end{equation}
where $A(z)$ and $B(z)$ are entire functions. It is well known result that solutions of \eqref{eq1} are also entire functions. All solutions of \eqref{eq1} are of finite order if and only if both coefficients are polynomials. If either of $A(z)$ or $B(z)$ is transcendental entire then almost all solutions are of infinite order. Gundersen\cite{gundersen2} proved that any non-constant solution is of infinite order when $\rho(A)<\rho(B)$. Hellerstein, Miles and Rossi\cite{heller} proved that any non-constant solution is of infinite order when $\rho(B)<\rho(A)\leq1/2$. Kumar and Saini\cite{kumarsaini} consider $\lambda(A)<\rho(A)$ and $B(z)$ a transcendental entire function satisfying either $\rho(B)\neq\rho(A)$ or $B(z)$ having Fabry gap and proved that non-trival solutions of \eqref{eq1} are of infinite order. J. Wang and I. Laine\cite{wanglaine} consider $A(z)=h(z)e^{-z}$ and $B(z)$ to satisfy \begin{equation}\label{eq2}
T(r,B)\sim\log M(r,B)
\end{equation}
in a set $E$ satisfying $\overline{\log dens}(E)>0$, where the above notation means $$\lim\limits_{r\to\infty}\frac{T(r,B)}{\log M(r,B)}=1.$$
They have proved the following result.
\\

\textbf{Theorem A}\cite{wanglaine} Suppose that $A(z)$ and $B(z)$ are entire functions where $A(z)=h(z)e^{-z}$ and $\rho(h)<\rho(B)=1$, and that $B(z)$ satisfies \eqref{eq2} in a set of positive upper logarithmic density. Then every non-trivial solution $f$ of equation \eqref{eq1} is of infinite order.\\
\\
They considered $\rho(A)=\rho(B)=1$.  We have considered $\lambda(A)<\rho(A)=n$ and improved above result as follows.

\begin{theorem}
Let $A(z)$ satisfies $\lambda(A)<\rho(A)$ and $B(z)$ satisfies \eqref{eq2} in a set $E$ of positive upper logarithmic density. Then all non-trivial solutions of equation (\ref{eq1}) are of infinite order.
\end{theorem}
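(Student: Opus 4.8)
I would argue by contradiction. Suppose $f\not\equiv0$ is a solution of \eqref{eq1} with $\rho(f)<\infty$; here $B$ is necessarily transcendental (otherwise \eqref{eq2} carries no information), a fact used only at the very end. The first step exploits the hypothesis on $A$ structurally: a transcendental entire function whose exponent of convergence of zeros is strictly smaller than its finite order must have integer order, so $n:=\rho(A)\in\{1,2,\dots\}$, and by the Hadamard factorization theorem $A(z)=v(z)e^{P(z)}$ with $P$ a polynomial of degree $n$, say with leading coefficient $a_n=|a_n|e^{i\varphi}$, and $v$ entire with $\rho(v)=\lambda(A)<n$. Along $z=re^{i\theta}$ one has $\operatorname{Re}P(z)=|a_n|r^{n}\cos(\varphi+n\theta)+O(r^{n-1})$, so the $2n$ critical rays $\cos(\varphi+n\theta)=0$ split the plane into $2n$ open sectors of opening $\pi/n$; on $n$ of them $\operatorname{Re}P\to-\infty$ like $r^{n}$, and since $\lambda(A)<n$ the function $|A(z)|=|v(z)|e^{\operatorname{Re}P(z)}$ decays to zero exponentially there. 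Fix one such decay sector and a closed subsector $S=\{\,z:\alpha\le\arg z\le\beta\,\}$ with $0<\beta-\alpha<\pi/n$ on which $\cos(\varphi+n\theta)\le-c<0$.

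Second, I would bound $B$ on $S$. By Gundersen's estimate for the logarithmic derivative of the finite-order function $f$, there are a constant $M>0$ and a set $E_1\subset[1,\infty)$ of finite logarithmic measure such that, uniformly in $\arg z$,
\[
\Bigl|\frac{f''(z)}{f(z)}\Bigr|\le r^{M}\quad\text{and}\quad\Bigl|\frac{f'(z)}{f(z)}\Bigr|\le r^{M}\qquad(|z|=r\notin E_1,\ r\ \text{large}).
\]
On $S$, for $r$ large, $|v(z)|\le\exp(r^{\lambda(A)+\varepsilon})$ with $\lambda(A)+\varepsilon<n$ and $\operatorname{Re}P(z)\le-\tfrac{c}{2}|a_n|r^{n}$, hence $|A(z)|\le\exp\bigl(r^{\lambda(A)+\varepsilon}-\tfrac{c}{2}|a_n|r^{n}\bigr)\le1$. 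Writing \eqref{eq1} as $B=-f''/f-A\,f'/f$ gives
\[
|B(z)|\le r^{M}+|A(z)|\,r^{M}\le 2r^{M}\qquad(z\in S,\ |z|=r\notin E_1,\ r\ \text{large}),
\]
so on the whole arc $\alpha\le\theta\le\beta$ of each such circle $|z|=r$ the function $B$ is polynomially bounded.

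The last step confronts this with \eqref{eq2}. Since $N(r,B)\equiv0$,
\[
T(r,B)=\frac{1}{2\pi}\int_{0}^{2\pi}\log^{+}|B(re^{i\theta})|\,d\theta\le\Bigl(1-\frac{\beta-\alpha}{2\pi}\Bigr)\log M(r,B)+\frac{(\beta-\alpha)(M\log r+\log 2)}{2\pi}
\]
for $r\notin E_1$ large, bounding $\log^{+}|B|$ by $\log(2r^{M})$ on the arc $[\alpha,\beta]$ and by $\log M(r,B)$ off it. Since $E_1$ has finite logarithmic measure while $E$ has positive upper logarithmic density, $E\setminus E_1$ still has positive upper logarithmic density, so there is a sequence $r_k\to\infty$ in $E\setminus E_1$. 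Along it \eqref{eq2} gives $T(r_k,B)=(1+o(1))\log M(r_k,B)$, which together with the displayed inequality forces $\tfrac{\beta-\alpha}{2\pi}\log M(r_k,B)\le o\bigl(\log M(r_k,B)\bigr)+O(\log r_k)$, i.e. $\log M(r_k,B)=O(\log r_k)$, contradicting the transcendence of $B$. This proves the theorem.

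The step I expect to be most delicate is the exceptional-set bookkeeping at the end: one needs radii $r_k$ that simultaneously lie in $E$, avoid $E_1$, are large enough for the exponential decay of $A$ on $S$ to overwhelm the polynomial factor $r^{M}$, and for which the arc $[\alpha,\beta]$ lies strictly inside the decay sector; the point is that a set of finite logarithmic measure is negligible next to a set of positive upper logarithmic density, so such $r_k$ are plentiful. A minor point, already built in above, is that Gundersen's bound is used in its form uniform in $\arg z$, so that $B$ is controlled on a whole arc rather than along a single ray --- an arc that has positive angular measure precisely because $n=\rho(A)\ge1$.
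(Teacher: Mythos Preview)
Your argument is correct and shares the paper's overall skeleton---assume $\rho(f)<\infty$, use $\lambda(A)<\rho(A)$ and Hadamard to locate a sector on which $A$ decays, invoke Gundersen's logarithmic-derivative estimates on $f$, and conclude from the equation that $|B|$ is only polynomially large there---but the two proofs diverge in how they extract the contradiction from \eqref{eq2}. The paper first proves an auxiliary Lemma~\ref{l6}: the set $I_r=\{\theta:\log|B(re^{i\theta})|\le(1-c)\log M(r,B)\}$ has small measure for $r\in E$, and (with Fuchs' Lemma~\ref{l1} to propagate the bound) picks a single angle $\theta$ lying simultaneously in a decay ray of $A$, outside the angular exceptional set of Gundersen's Lemma~\ref{l2}(a), and outside $I_r$, to compare $M(r,B)^{1-2c}$ with a polynomial in $r$. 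You instead use Gundersen in its radial form (Lemma~\ref{l2}(b), uniform in $\arg z$, exceptional set of finite logarithmic measure in $r$) so that $|B|\le 2r^{M}$ holds on an entire arc $[\alpha,\beta]$ of fixed positive length, and then feed this directly into $T(r,B)=\frac{1}{2\pi}\int_0^{2\pi}\log^{+}|B|\,d\theta$, obtaining $T(r,B)\le\bigl(1-\tfrac{\beta-\alpha}{2\pi}\bigr)\log M(r,B)+O(\log r)$, which contradicts $T(r,B)\sim\log M(r,B)$ along $E\setminus E_1$.

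Your route is shorter and avoids both Lemma~\ref{l6} and Fuchs' Lemma~\ref{l1}; the trade is that you need the angle-uniform Gundersen bound, whereas the paper gets by with the ray-wise version. The bookkeeping you flag (finite logarithmic measure is negligible against positive upper logarithmic density) is handled correctly. One small remark: your assertion that $B$ is ``necessarily transcendental'' does not follow from \eqref{eq2} alone (polynomials satisfy it too); the paper's proof has the same implicit assumption, and in the context of the paper $B$ is understood to be transcendental.
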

For an entire function $\displaystyle{f(z)=\sum\limits_{n=0}^{\infty} a_{\lambda_n}z^{\lambda_n}}$, we say that it has Fe$\acute{j}$er gaps if the infinite series $\displaystyle{\sum\limits_{n=0}^{\infty}\frac{1}{\lambda_n}}$ converges. Murai \cite{murai} proved that function having Fe$\acute{j}$er gaps satisfies \eqref{eq2} in a set of positive upper logarithmic density and has no deficient value. Thus, we can state the corollary with $A(z)$ satisfying condition of previous theorem and $B(z)$ having Fe$\acute{j}$er gap. 

\begin{crlly}Let $A(z)$ satisfies the condition given in Theorem 1 and $B(z)$ has Fejer gaps in a set of positive upper logarithmic density. Then all solutions of \eqref{eq1} are of infinite order.
\end{crlly}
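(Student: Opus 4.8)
The plan is to obtain the corollary as an immediate consequence of Theorem 1 combined with Murai's gap theorem, so the work reduces to checking that the two hypotheses of Theorem 1 are met. First I would dispose of the coefficient $A(z)$: the corollary assumes outright that $A$ satisfies the condition of Theorem 1, i.e. $\lambda(A)<\rho(A)$, so there is nothing further to verify for the first coefficient.

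Second, I would bring in Murai's result as recalled in the introduction: if $B(z)=\sum_{n=0}^{\infty}a_{\lambda_n}z^{\lambda_n}$ has Fej\'er gaps, meaning $\sum_{n=0}^{\infty}\tfrac{1}{\lambda_n}<\infty$, then $B$ satisfies $T(r,B)\sim\log M(r,B)$ along a set $E$ of positive upper logarithmic density, which is precisely condition \eqref{eq2} on such a set. I would also note in passing that the Fej\'er gap hypothesis forces $B$ to be transcendental (it has infinitely many nonzero Taylor coefficients), so the equation genuinely falls outside the case in which all solutions are of finite order.

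Third, with both hypotheses of Theorem 1 now in force, I would simply apply Theorem 1 to conclude that every non-trivial solution of \eqref{eq1} has infinite order; the trivial solution $f\equiv 0$ is excluded as usual, and this yields the stated conclusion.

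I do not anticipate a real obstacle: all the analytic substance is packaged inside Theorem 1 and inside Murai's theorem, both of which may be quoted. The only point deserving a line of care is matching the notion of exceptional set — one must observe that the set furnished by Murai's theorem is exactly of the type (positive upper logarithmic density) required by Theorem 1, which it is by the way Murai's result is stated in the text.
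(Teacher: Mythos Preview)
Your proposal is correct and matches the paper's approach exactly: the paper likewise derives the corollary immediately from Theorem~1 by invoking Murai's result that a Fej\'er-gap entire function satisfies \eqref{eq2} on a set of positive upper logarithmic density. There is no additional argument in the paper beyond this reduction.
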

\begin{eg}$A(z)=e^z$ and $\displaystyle{B(z)=\sum\limits_{n=0}^{\infty}\frac{z^{n^2}}{(n^2)!}}$ satisfies the condition of Theorem 1, equation  \eqref{eq1} has all non-trivial solutions of infinite order.
\end{eg}
Kwon\cite{kwon} considered the case of $A(z)$ to be of non-intergral order and proved that all non-trivial solutions of \eqref{eq1} are of infinite order when $0<\rho(B)<1/2$.
\\
 
\textbf{Theorem B}\cite{kwon} Suppose that $A(z)$ is an entire function of finite non-integral order with $\rho(A)>1$, and that all the zeros of $A(z)$ lie in the angular sector  $\theta_1<argz<\theta_2$ satisfying
$$\theta_2-\theta_1<\frac{\pi}{p+1}$$ if $p$ is odd, and
$$\theta_2-\theta_1<\frac{(2p-1)\pi}{2p(p+1)}$$ if $p$ is even, where $p$ is the genus of $A(z)$. Let $B(z)$ be an entire function with
$0<\rho(B)<1/2$. Then every non-constant solution $f$ of \eqref{eq1} has infinite order.
\\

It remains open what will be the behaviour of solution of \eqref{eq1} when $\rho(B)\geq\frac{1}{2}$. For this situation we proposed the following result.
Before stating next result we are giving definition of Fabry gap. For the entire function $\displaystyle{f(z)=\sum\limits_{n=0}^{\infty} a_{\lambda_n}z^{\lambda_n}}$ we say it has Fabry gap if $\displaystyle{\left\{\frac{\lambda_n}{n}\right\}}$ diverges when $n\to\infty$ 
\begin{theorem}
Suppose that $A(z)$ is an entire function of finite non-integral order with $\rho(A)>1$, and that all the zeros of $A(z)$ lie in the angular sector  $\theta_1<argz<\theta_2$ satisfying
$$\theta_2-\theta_1<\frac{\pi}{p+1}$$ if $p$ is odd, and
$$\theta_2-\theta_1<\frac{(2p-1)\pi}{2p(p+1)}$$ if $p$ is even, where $p$ is the genus of $A(z)$. Let $B(z)$ be an entire function with Fabry gap. Then all non-trivial solutions $f$ of equation \eqref{eq1} are of infinite order.
\end{theorem}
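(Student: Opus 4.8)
The plan is to argue by contradiction: suppose $f\not\equiv0$ is a solution of \eqref{eq1} with $\rho(f)<\infty$ and produce a contradiction. The scheme is the one behind Theorem B, with the Fabry‑gap hypothesis on $B$ taking over the role played there by $0<\rho(B)<1/2$ (via the $\cos\pi\rho$ theorem): one shows that in a suitable sector the coefficient $A$ is negligible, so that $B$ is controlled there by the logarithmic derivatives of $f$, and then the large–minimum–modulus property of $B$ forces $B$ to be a polynomial.

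First I would record the tools. By Gundersen's estimates for logarithmic derivatives there are a set $E_1\subset[1,\infty)$ of finite linear measure and a constant $M>0$ with
\[
\left|\frac{f'(z)}{f(z)}\right|\le|z|^{M},\qquad\left|\frac{f''(z)}{f(z)}\right|\le|z|^{M}\qquad(|z|\notin E_1).
\]
Next, the hypothesis on $A$: because $A$ has finite non‑integral order, hence genus exactly $p$, and all of its zeros lie in the arc $\theta_1<\arg z<\theta_2$ whose opening obeys the stated bound, the classical estimates for the canonical product of $A$ (the same ones used in the proof of Theorem B) yield an open sector $S=\{z:\phi_1<\arg z<\phi_2\}$ with $[\phi_1,\phi_2]\cap[\theta_1,\theta_2]=\emptyset$ in which $A$ is small: $|A(z)|=O(|z|^{N})$, indeed $A(z)\to0$, as $z\to\infty$ in $S$. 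Finally, the hypothesis on $B$: since $B$ has Fabry gaps it is transcendental, and for every $\epsilon>0$ there is a set $F\subset[1,\infty)$ whose complement has zero logarithmic density such that $\log|B(re^{i\theta})|\ge(1-\epsilon)\log M(r,B)$ for all $\theta$ and all $r\in F$.

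The combination is then immediate. Rewrite \eqref{eq1} as $-B=f''/f+A\,(f'/f)$ and evaluate it at $z=re^{i\theta}$ with $\theta\in S$ and $r\in F\setminus E_1$:
\[
|B(z)|\le\left|\frac{f''(z)}{f(z)}\right|+|A(z)|\left|\frac{f'(z)}{f(z)}\right|\le|z|^{M}+|z|^{N}|z|^{M}\le C|z|^{M'} .
\]
On the other hand $|B(z)|\ge M(r,B)^{1-\epsilon}$, so $M(r,B)^{1-\epsilon}\le Cr^{M'}$, i.e. $\log M(r,B)\le C'\log r$, for every $r\in F\setminus E_1$. Because $F\setminus E_1$ has full logarithmic density and $r\mapsto\log M(r,B)$ is nondecreasing, a routine argument (for each large $r$ pick $r'\in[r,r^{2}]\cap(F\setminus E_1)$) upgrades this to $\log M(r,B)=O(\log r)$ as $r\to\infty$, so $B$ is a polynomial, contradicting the transcendence of a Fabry‑gap series.

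The real work — and the only place where the non‑integrality of $\rho(A)$ and the precise numbers $\pi/(p+1)$, $(2p-1)\pi/(2p(p+1))$ are used — is producing the sector $S$ in which $A$ is negligible. This comes from the Hadamard factorization $A(z)=z^{m}e^{g(z)}\prod_nE_p(z/z_n)$ together with the fact that all the $z_n$ lie in an arc of length $<\pi/(p+1)$ (resp. the even‑$p$ bound): the partial sums $\sum z_n^{-p}$ and $\sum z_n^{-(p+1)}$ then cannot cancel, and one can choose a direction $\arg z=\phi$ outside $[\theta_1,\theta_2]$ along which the two dominant contributions to $\log|\prod_nE_p(z/z_n)|$, and also $\mathrm{Re}\,g(z)$, are all negative, forcing $|A(z)|\to0$; the opening bounds are exactly what guarantee such a $\phi$ exists irrespective of where inside $[\theta_1,\theta_2]$ the zeros sit. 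I expect that step, rather than the soft argument above, to be the crux; the remaining care is only the bookkeeping of the exceptional sets of radii and the standard monotonicity step.
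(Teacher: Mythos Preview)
Your argument is essentially the paper's own proof: assume $\rho(f)<\infty$, apply Gundersen's logarithmic-derivative estimates off a small exceptional set of radii, use Kwon's result (Lemma~\ref{l4} here) to obtain a ray on which $|A|$ stays bounded, and invoke the Fabry-gap minimum-modulus property (Lemma~\ref{l5} here) to force $M(r,B)$ to have polynomial growth on an unbounded set of $r$, contradicting transcendence. Two minor remarks: the Fabry-gap lemma as stated in the paper yields only a set $H$ of positive upper logarithmic density (with the same $\xi$ appearing in both the density bound and the factor $1-\xi$), not the full-density set you describe, but this weaker form already suffices and makes your final monotonicity step unnecessary; and the paper gets the smallness of $A$ on a single ray (after rotation, the negative real axis) via Lemma~\ref{l4} rather than on a whole sector, which is all the argument needs.
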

In next section we list the preliminary results going to be used to prove the theorem and in section 3 we give proof of proposed theorem.

\section{Preliminary lemmas}
In this section we state some result which will be useful in proving our main results. We denote upper and lower logarithmic density of set $E$ by $\overline{\log dens}(E)$ and $\underline{\log dens}(E)$. Linear measure of a set $E$ is denoted by $m(E)$. Following two lemmas provide estimate of meromorphic function of finite order.
\begin{lemma}\cite{fuchs}\label{l1}
Let $f(z)$ be a meromorphic function of finite order $\rho$. Given $\xi> 0$ and $\delta$, $0 <\delta<1/2$, there is a constant $K(\rho,\xi)$ such that for all $r$ in a set $E$ of lower logarithmic density greater than $1-\xi$ and for every interval $J$ of length $\delta$ $$r\int\limits_J\left|\frac{f'(re^{\iota\theta})}{f(re^{\iota\theta})}\right|d\theta < K(\rho,\xi)(\delta\log\frac{1}{\delta})T(r,f).$$
\end{lemma}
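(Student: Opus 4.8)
The plan is to derive the estimate from the Poisson--Jensen formula. Observe first that $\bigl|\tfrac{d}{d\theta}\log f(re^{\iota\theta})\bigr|=r\,\bigl|f'(re^{\iota\theta})/f(re^{\iota\theta})\bigr|$, so along a circle $|z|=r$ carrying no zero or pole of $f$ the quantity $r\int_J|f'/f|\,d\theta$ equals the total variation of $\theta\mapsto\log f(re^{\iota\theta})$ on the arc $J$; it is this that we must bound by a $\delta\log\tfrac1\delta$--multiple of $T(r,f)$. Writing the Poisson--Jensen formula at radius $R=2r$ gives, for $|z|=r$,
\[
\frac{f'(z)}{f(z)}=P(z)+\sum_{0<|c_\mu|<R}m_\mu\Bigl(\frac{1}{z-c_\mu}-\frac{\overline{c_\mu}}{R^{2}-\overline{c_\mu}\,z}\Bigr),
\]
where $P(z)=\frac{1}{2\pi}\int_0^{2\pi}\frac{2Re^{\iota\varphi}}{(Re^{\iota\varphi}-z)^{2}}\log|f(Re^{\iota\varphi})|\,d\varphi$ and $c_\mu$ runs over the zeros (with multiplicities $m_\mu>0$) and poles ($m_\mu<0$) of $f$ in $|z|<R$.

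Two pieces are easy. On $|z|=r$ we have $|P(z)|\le\frac{C}{r}\int_0^{2\pi}\bigl|\log|f(2re^{\iota\varphi})|\bigr|\,d\varphi\le\frac{C'}{r}T(2r,f)$, so $r\int_J|P|\,d\theta\le C'\delta\,T(2r,f)$; as $f$ has finite order, $T(2r,f)\le K_\rho T(r,f)$ outside a set of finite logarithmic measure, giving a contribution $\le K_\rho C'\,\delta\,T(r,f)$. Each correction term $\overline{c_\mu}/(R^{2}-\overline{c_\mu}z)$ is $O(1/r)$ on $|z|=r$ and there are $n(2r,f)+n(2r,1/f)\le K_\rho'T(r,f)$ of them off a small set, contributing $\le K_\rho'\,\delta\,T(r,f)$. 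It then remains to bound $\int\bigl(r\int_J\frac{d\theta}{|re^{\iota\theta}-\zeta|}\bigr)\,d(n_f+n_{1/f})(\zeta)$, the outer integral being over the zeros and poles $\zeta$ of $f$ with $|\zeta|<2r$.

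The heart of the matter is the kernel $I(\zeta):=r\int_J\frac{d\theta}{|re^{\iota\theta}-\zeta|}$. From the elementary bound $|re^{\iota\theta}-\zeta|\ge\max\bigl(\bigl|\,|\zeta|-r\,\bigr|,\;c\,r\,\mathrm{dist}(\theta,\arg\zeta)\bigr)$ one checks that $I(\zeta)=O(\delta)$ when $\zeta$ is far from the arc $\{re^{\iota\theta}:\theta\in J\}$, and for $\zeta$ near it that $I(\zeta)\lesssim\min\bigl(\delta r/|\,|\zeta|-r\,|,\;\log\tfrac1\delta\bigr)$. I would then split the zeros and poles into dyadic layers $L_j=\{\zeta:\bigl|\,|\zeta|-r\,\bigr|\asymp 2^{-j}r\}$: for $r$ outside an exceptional set of small logarithmic measure a Borel--Nevanlinna type counting estimate yields $\#(L_j)\le C(\rho,\xi)\,2^{-j}\,T(r,f)$ (thin annuli about $|z|=r$ carry few zeros, for most radii $r$), while $I(\zeta)\lesssim 2^{j}\delta$ on $L_j$ (and $\lesssim\log\tfrac1\delta$ on the innermost layers); hence each layer contributes at most $C(\rho,\xi)\,\delta\,T(r,f)$, and summing the $O(\log\tfrac1\delta)$ layers that can carry zeros of a finite order function produces the factor $\delta\log\tfrac1\delta$. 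One finally unions the exceptional sets from the three estimates, arranging them to have lower logarithmic density below $\xi$ with the constant $K(\rho,\xi)$ free of $\delta$. The step I expect to be the main obstacle is exactly this last one: zeros and poles sitting very close to $|z|=r$ --- where $f'/f$ is largest --- and possibly bunched in argument near $J$ are what force radii to be discarded, and the delicate point is to carry out the dyadic decomposition so that the discarded set of $r$ has logarithmic density governed solely by $\xi$, the constant remains independent of $\delta$, and the summation over scales produces precisely one power of $\log\tfrac1\delta$.
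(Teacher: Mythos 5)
The paper itself offers no proof of this lemma --- it is quoted from Fuchs's 1963 paper --- so there is no in-house argument to measure yours against; I can only judge the sketch on its merits. The architecture you propose (differentiated Poisson--Jensen at radius $2r$, pointwise estimates for the kernel $I(\zeta)=r\int_J d\theta/|re^{\iota\theta}-\zeta|$, and a counting bound for zeros and poles in thin annuli about $|z|=r$ valid for most $r$) is the right one and is essentially Fuchs's original route. But two steps are not yet proofs, and one of them contains a false intermediate bound. First, the claim $I(\zeta)\lesssim\log\tfrac1\delta$ on the innermost layers is wrong: writing $a=\bigl|\,|\zeta|-r\,\bigr|/r$, for $\arg\zeta\in J$ one has $I(\zeta)\asymp\log\tfrac{a+\delta}{a}$, which exceeds any fixed multiple of $\log\tfrac1\delta$ as soon as $a\ll\delta^{2}$; a single zero at distance $re^{-T(r,f)}$ from the circle would by itself overwhelm the target bound $\delta\log\tfrac1\delta\,T(r,f)$. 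The cure is already latent in your own counting estimate: if it is proved in the maximal form $n(r(1+t))-n(r(1-t))\le C(\rho,\xi)\,t\,T(r,f)$ for \emph{all} $0<t\le1$, then the layers with $2^{-j}<1/(C\,T(r,f))$ are empty for good $r$, and on the remaining interior layers the bound $I(\zeta)\lesssim\log(2^{j}\delta)+1$ summed against the cardinalities $C2^{-j}T(r,f)$ gives a convergent geometric-times-linear series contributing only $O(\delta\,T(r,f))$, while the $O(\log\tfrac1\delta)$ outer layers contribute $O(\delta\,T(r,f))$ each, exactly as you intend.

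Second, that maximal counting estimate --- the step you yourself flag as the main obstacle --- is the real content of the lemma and cannot be discharged by invoking ``a Borel--Nevanlinna type estimate.'' It is true, and here is one way to close it: in the variable $\log r$ the counting measure of the moduli of the zeros and poles has total mass at most $n(4R,f)+n(4R,1/f)\le C\,T(8R,f)$ over each block $R\le r\le 2R$, so the weak $(1,1)$ inequality for the Hardy--Littlewood maximal function bounds the $d\log r$-measure of the set where $\sup_{t}t^{-1}\bigl(n(r(1+t))-n(r(1-t))\bigr)>\Lambda\,T(R,f)$ by $C\,T(8R,f)/(\Lambda\,T(R,f))$; since $T$ has finite order, $T(8R,f)\le K(\rho,\xi)\,T(R,f)$ for all dyadic $R$ outside a set of small upper logarithmic density (note that this ratio bound holds only off a set of small \emph{density}, not off a set of finite logarithmic measure as you assert --- a step function of finite order shows the stronger claim can fail), and choosing $\Lambda=\Lambda(\rho,\xi)$ large and discarding the bad dyadic blocks yields the good set of lower logarithmic density greater than $1-\xi$ with a constant independent of $\delta$. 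With these two repairs --- and the analogous density (rather than finite-measure) bookkeeping in your estimate of the term $P(z)$ and of the number of correction terms --- your outline becomes a complete proof.
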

\begin{lemma}\cite{gundersen}\label{l2}
 Let $f$ be a transcendental meromorphic function with
finite order and $(k, j)$ be a finite pair of integers that satisfies $k > j \geq 0$ and let $\epsilon>0$ be a given constant. Then following statements holds:
\begin{enumerate}[(a)]
\item  there exists a set $E_1 \subset [0, 2\pi]$ with linear measure zero such that for $\theta \in [0, 2\pi) \setminus E_1$ there exist $R(\theta) >0$ such that 
$$\left|\frac{f^{(k)}(z)}{f^{(j)}(z)}\right|\leq|z|^{(k-j)(\rho(f)-1+\epsilon)}$$
 for all $k, j$; $| z |> R(\theta)$ and $arg z = \theta$
\item there exists a set $E_2 \subset (1,\infty)$ with finite logarithmic measure  such that for all $|z|\not\in E_2\cup[0,1]$ such that inequality in (a) holds  for all $k, j$ and $| z |\geq R(\theta)$.
\item there exists a set $E_3 \subset [0,\infty)$ with finite linear measure  such that for all $|z|\not\in E_3$ such that 
$$\left|\frac{f^{(k)}(z)}{f^{(j)}(z)}\right|\leq|z|^{(k-j)(\rho(f)+\epsilon)}$$
 holds  for all $k, j$.
\end{enumerate}
\end{lemma}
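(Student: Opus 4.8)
The plan is to argue by contradiction. Suppose $f$ is a non-trivial solution of \eqref{eq1} with $\rho(f)<\infty$. Since $A$ is transcendental, $f$ must be transcendental entire, so Lemma \ref{l2} applies to it. Rewriting \eqref{eq1} as
\[
B(z)=-\frac{f''(z)}{f(z)}-A(z)\,\frac{f'(z)}{f(z)},
\]
the whole argument reduces to exhibiting a single ray along which the right-hand side is forced to be small (at most polynomial in $|z|$), while the Fabry gap hypothesis forces $B$ to be large there. The two competing estimates will then be incompatible for large $r$.

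First I would locate a \emph{critical ray} for $A$. Because $A$ has finite non-integral order $\rho(A)>1$ with genus $p$ and all its zeros confined to the narrow sector $\theta_1<\arg z<\theta_2$, the indicator $h_A(\theta)=\limsup_{r\to\infty} r^{-\rho(A)}\log|A(re^{i\theta})|$ is $\rho(A)$-trigonometrically convex and, away from the zero sector, is governed by a $\cos(\rho(A)\,\cdot)$-type function. The angular-width conditions $\theta_2-\theta_1<\pi/(p+1)$ ($p$ odd) and $\theta_2-\theta_1<(2p-1)\pi/(2p(p+1))$ ($p$ even) are exactly what is needed to guarantee a direction $\theta_0\notin[\theta_1,\theta_2]$ with $h_A(\theta_0)<0$; this is the content of the indicator analysis in Kwon's proof of Theorem B \cite{kwon}. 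Hence there are constants $c>0$ and $R>0$ with $|A(re^{i\theta_0})|\le \exp(-c\,r^{\rho(A)})$ for $r>R$, so $A$ decays to zero exponentially along $\arg z=\theta_0$. Since the exceptional set $E_1$ of Lemma \ref{l2}(a) has linear measure zero while the decay persists on a whole interval of directions, I may take $\theta_0\notin E_1$. Then for $\arg z=\theta_0$ and $|z|=r$ large, Lemma \ref{l2}(a) gives $|f''/f|\le r^{2(\rho(f)-1+\varepsilon)}$ and $|f'/f|\le r^{\rho(f)-1+\varepsilon}$, so the displayed identity yields the purely polynomial bound
\[
|B(re^{i\theta_0})|\le r^{2(\rho(f)-1+\varepsilon)}+\exp(-c\,r^{\rho(A)})\,r^{\rho(f)-1+\varepsilon}\le r^{\,d},
\]
valid for $r>R_0$, where $d=2(\rho(f)-1+\varepsilon)$; in particular $\log|B(re^{i\theta_0})|=O(\log r)$.

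On the other hand, $B$ has Fabry gaps, so it is transcendental and, by the standard asymptotic for gap series, $\log|B(re^{i\theta})|=(1+o(1))\log M(r,B)$ as $r\to\infty$ outside an exceptional set $E_0$ of zero logarithmic density, uniformly in $\theta$; in particular this holds along $\arg z=\theta_0$. Because $B$ is transcendental entire we have $\log M(r,B)/\log r\to\infty$, so along a sequence $r_k\to\infty$ with $r_k\notin E_0$ and $r_k>R_0$ the quantity $\log|B(r_ke^{i\theta_0})|$ grows faster than any fixed multiple of $\log r_k$. This contradicts the bound $\log|B(re^{i\theta_0})|=O(\log r)$ obtained above. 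Hence no non-trivial finite-order solution can exist, and every non-trivial solution of \eqref{eq1} is of infinite order.

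The main obstacle is the first step: producing the critical ray $\theta_0$ on which $A$ decays, since the angular-width hypotheses on the zeros of $A$ must be used at full strength to force $h_A$ negative somewhere outside $[\theta_1,\theta_2]$. Everything after it is a routine comparison of a polynomial upper bound against the super-polynomial lower bound supplied by the Fabry gap condition. The novelty relative to Theorem B \cite{kwon} is only that the Fabry gap property of $B$ replaces the order restriction $0<\rho(B)<1/2$ as the device that makes $|B|$ large along that same ray.
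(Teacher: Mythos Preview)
Your write-up does not prove the stated lemma. Lemma~\ref{l2} is Gundersen's estimate on logarithmic derivatives $|f^{(k)}/f^{(j)}|$ for a transcendental meromorphic function of finite order; the paper merely cites it from \cite{gundersen} and offers no proof. What you have produced is instead an argument for \emph{Theorem~2} of the paper (infinite order of all non-trivial solutions of \eqref{eq1} when $A$ has non-integral order with zeros in a narrow sector and $B$ has Fabry gaps). Nothing in your text addresses why the inequalities $|f^{(k)}/f^{(j)}|\le |z|^{(k-j)(\rho(f)-1+\varepsilon)}$ or $|f^{(k)}/f^{(j)}|\le |z|^{(k-j)(\rho(f)+\varepsilon)}$ should hold, which is the actual content of the statement.

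If your intention was Theorem~2, then your argument is essentially the paper's own, with two cosmetic differences. You invoke the indicator $h_A$ directly to locate a decay ray $\theta_0$ and then apply Lemma~\ref{l2}(a) along that ray; the paper instead rotates the plane, uses Lemma~\ref{l4} to get $|A(-r)|\le\exp(-cr^p)$, and applies Lemma~\ref{l2}(c) outside a set of finite linear measure. For the lower bound on $|B|$, you quote the Fabry-gap asymptotic $\log|B(re^{i\theta})|=(1+o(1))\log M(r,B)$ off a set of zero logarithmic density; the paper uses the slightly weaker form in Lemma~\ref{l5}, namely $\log m(r,B)>(1-\xi)\log M(r,B)$ on a set of positive upper logarithmic density. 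Either pair of choices leads to the same polynomial-versus-super-polynomial contradiction, so neither route buys anything the other does not.
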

Following lemma is due to Bank, Laine and Langley\cite{banklainelangley} that gives an estimate for an entire function with integral order.
\begin{lemma}\cite{banklainelangley}\label{l3}
Let $A(z) = h(z)e^{P(z)}$ be an entire function with $\lambda(A) < \rho(A) = n$, where $P(z)$ is a polynomial of degree $n$. Then for
every $\epsilon > 0$ there exists $E \subset [0, 2\pi)$ of linear measure zero satisfying
\begin{enumerate}[(i)]
\item for $\theta \in [0, 2\pi)/setminus E$ with $\delta(P, \theta) > 0$, there exists $R > 1$ such that
$$exp ((1 - \epsilon)\delta(P, \theta)r^n) \leq |A(re^{\iota\theta})|$$
for $r > R$;
\item for $\theta \in [0,2\pi)\setminus E$ with $\delta(P, \theta) < 0$, there exists $R > 1$ such that
$$|A(re^{\iota\theta})| \leq exp ((1 - \epsilon)\delta(P, \theta)r^n)$$ 
for $r > R$.
\end{enumerate}
\end{lemma}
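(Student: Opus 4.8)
The plan is to reduce the two-sided estimate to the Hadamard factorization already forced by the hypotheses. Since $\lambda(A)<\rho(A)=n$, writing $A=h e^{P}$ with $\deg P=n$ identifies $h$ (up to a nonvanishing unit) with the canonical product over the zeros of $A$, so that $\rho(h)=\lambda(h)=\lambda(A)=:\mu<n$. Taking absolute values gives the fundamental decomposition
$$\log\lvert A(re^{\iota\theta})\rvert=\log\lvert h(re^{\iota\theta})\rvert+\operatorname{Re}P(re^{\iota\theta}),$$
so the problem splits into estimating the two summands. For the polynomial term, if $a_n=\lvert a_n\rvert e^{\iota\psi}$ is the leading coefficient of $P$, I would record the uniform asymptotic
$$\operatorname{Re}P(re^{\iota\theta})=\delta(P,\theta)\,r^{n}+O(r^{n-1}),\qquad \delta(P,\theta)=\lvert a_n\rvert\cos(\psi+n\theta),$$
the error term being uniform in $\theta$. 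This is the term that produces the exponential $\exp(\delta(P,\theta)r^{n})$ and determines the dichotomy between cases (i) and (ii). Everything then reduces to showing that $\log\lvert h(re^{\iota\theta})\rvert$ is $o(r^{n})$ in the appropriate one-sided sense.

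The upper estimate is immediate and needs no exceptional set. For any small $\epsilon'>0$ with $\mu+\epsilon'<n$, the order bound gives $\log\lvert h(re^{\iota\theta})\rvert\le\log M(r,h)\le r^{\mu+\epsilon'}=o(r^{n})$ for all $\theta$ and all large $r$. This already settles part (ii): in a direction with $\delta(P,\theta)<0$ the dominant term $\delta(P,\theta)r^{n}$ is negative and swamps both the $o(r^{n})$ contribution of $h$ and the $O(r^{n-1})$ error, so for $r$ large one obtains $\log\lvert A(re^{\iota\theta})\rvert\le\delta(P,\theta)r^{n}+o(r^{n})\le(1-\epsilon)\delta(P,\theta)r^{n}$, using $-\epsilon\delta(P,\theta)r^{n}=\epsilon\lvert\delta(P,\theta)\rvert r^{n}\to\infty$.

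The real work, and the reason an angular null set $E$ appears, is the matching lower bound on $\lvert h\rvert$ required for part (i), since $\log\lvert h\rvert\to-\infty$ near each zero. I would obtain it from a minimum-modulus estimate for the canonical product: outside an $R$-set, that is, a countable family of discs $D(a_j,\delta_j)$ with $a_j\to\infty$ and $\sum_j\delta_j<\infty$, one has $\log\lvert h(z)\rvert\ge-\lvert z\rvert^{\mu+\epsilon'}$ for large $\lvert z\rvert$. The key step is to convert these radial discs into a measure-zero set of \emph{directions}: the disc $D(a_j,\delta_j)$ is seen from the origin within an angular window of length $\lesssim\delta_j/\lvert a_j\rvert\le\delta_j$, so the set of $\theta$ whose ray meets $D(a_j,\delta_j)$ has measure $\lesssim\delta_j$, and $\sum_j\delta_j<\infty$. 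A Borel--Cantelli argument then shows that the set $E_h$ of directions whose ray meets infinitely many of these discs has linear measure zero; adjoining to $E_h$ the finitely many critical directions where $\delta(P,\theta)=0$ produces the desired $E$. For $\theta\notin E$ the ray $\arg z=\theta$ meets only finitely many discs, hence avoids all of them for $r>R(\theta)$, giving $\log\lvert h(re^{\iota\theta})\rvert\ge-r^{\mu+\epsilon'}=o(r^{n})$; combined with $\delta(P,\theta)>0$ this yields $\log\lvert A(re^{\iota\theta})\rvert\ge\delta(P,\theta)r^{n}-o(r^{n})\ge(1-\epsilon)\delta(P,\theta)r^{n}$. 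The main obstacle is precisely this step—securing the pointwise minimum-modulus bound for a canonical product of order $\mu<n$ and packaging its exceptional discs into an angular null set—rather than the routine upper estimate or the polynomial asymptotics.
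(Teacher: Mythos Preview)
The paper does not prove this lemma at all; it is quoted from Bank--Laine--Langley and invoked as a black box in the proof of Theorem~1, so there is no in-paper argument to compare against.

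That said, your sketch is the standard proof and is essentially correct: decompose $\log\lvert A\rvert=\log\lvert h\rvert+\operatorname{Re}P$, extract the leading-term asymptotic $\operatorname{Re}P(re^{\iota\theta})=\delta(P,\theta)r^{n}+O(r^{n-1})$, bound $\log\lvert h\rvert$ above by $r^{\mu+\epsilon'}$ from the order of $h$, and for the lower bound invoke a minimum-modulus theorem for entire functions of order $<n$ to get $\log\lvert h(z)\rvert\ge -\lvert z\rvert^{\mu+\epsilon'}$ off an $R$-set, then turn the $R$-set into an angular null set via the Borel--Cantelli argument you give. One small imprecision: from $A=he^{P}$, $\deg P=n$, and $\lambda(A)<\rho(A)=n$ alone you cannot conclude $\rho(h)=\lambda(A)$, since $h$ may differ from the canonical product by a factor $e^{Q}$ with $\deg Q<n$; what is actually forced (by comparing with the Hadamard factorization of $A$) is only $\rho(h)<n$. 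But that weaker conclusion is precisely what your $o(r^{n})$ estimates need, so the argument goes through unchanged.
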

The next lemma gives estimates of function with non-integral order at particular value of $z$. 
\begin{lemma}\cite{kwon}\label{l4}
Let $f(z)$ be an entire function of finite non-integral order $\rho$ and of genus $p>1$. Suppose that for any given $\epsilon>0$, all the zeros of $f(z)$ have their arguments in the following set of real numbers:
$$S(p,\epsilon)=\displaystyle{\{\theta:|\theta|\leq\frac{\pi}{2(p+1)}-\epsilon\}}$$ if $p$ is odd, and
$$S(p,\epsilon)=\displaystyle{\{\theta:\frac{\pi}{2p}+\epsilon\leq|\theta|\leq\frac{3\pi}{2(p+1)}-\epsilon\}}$$ if $p$ is even. Then for any $c>1$, there exists a real number $R>0$ such that $$|f(-r)|\leq\exp(-cr^p)$$
for all $r\geq R$.
\end{lemma}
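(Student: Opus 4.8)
The plan is to pass to the Hadamard factorization of $f$ and reduce the estimate on the negative real axis to a sign computation for the Phragm\'en--Lindel\"of indicator of the canonical product. Write $f(z)=z^m e^{Q(z)}P(z)$, where $P(z)=\prod_n E_p(z/a_n)$ is the canonical product built from the zeros $a_n=t_n e^{\iota\phi_n}$ of $f$ and $\deg Q\le p$. Since $\rho$ is non-integral with $p<\rho<p+1$, the polynomial part contributes only $\log|z^m e^{Q(z)}|=O(r^p)=o(r^\rho)$, so it will be harmless; moreover $f$ must carry infinitely many zeros of convergence exponent exactly $\rho$ (otherwise the order would be the integer $\deg Q$). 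Thus the whole problem reduces to proving $\log|P(-r)|\le -\eta\, r^\rho$ for some $\eta>0$ and all large $r$.

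First I would record the contribution of a single zero on the negative axis, namely
\[
\log\left|E_p\!\left(\frac{-r}{a_n}\right)\right|=\frac{1}{2}\log\left(1+\frac{2r}{t_n}\cos\phi_n+\frac{r^2}{t_n^2}\right)+\sum_{k=1}^{p}\frac{(-1)^k}{k}\,\frac{r^k}{t_n^k}\cos k\phi_n,
\]
and then sum against the zero-counting measure. Following Levin's computation of the indicator of a canonical product of non-integral order, the radial integration factors out $r^\rho$ and leaves, in the direction $\theta=\pi$, an angular kernel equal to a positive constant times $\dfrac{\cos\rho\phi}{\sin\pi\rho}$ evaluated at each zero argument $\phi=\phi_n$. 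Hence $h_P(\pi)=\limsup_{r\to\infty} r^{-\rho}\log|P(re^{\iota\pi})|$ is an average of this kernel against the nonnegative angular zero-distribution, and it suffices to verify that the kernel is strictly negative for every admissible $\phi$.

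This sign check is where the two sector hypotheses do their work, and it is the heart of the argument. When $p$ is odd, $\sin\pi\rho=-\sin\pi(\rho-p)<0$, while $|\phi|\le\frac{\pi}{2(p+1)}-\epsilon$ together with $\rho<p+1$ forces $\rho|\phi|<\frac{\pi}{2}$, so $\cos\rho\phi>0$ and the kernel is negative. When $p$ is even, $\sin\pi\rho=\sin\pi(\rho-p)>0$, while $\frac{\pi}{2p}+\epsilon\le|\phi|\le\frac{3\pi}{2(p+1)}-\epsilon$ together with $p<\rho<p+1$ forces $\frac{\pi}{2}<\rho|\phi|<\frac{3\pi}{2}$, so $\cos\rho\phi<0$ and the kernel is again negative. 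In both cases the $\epsilon$ and the strict inequalities $p<\rho<p+1$ keep $\rho|\phi|$ bounded away from the critical angles $\frac{\pi}{2}$ and $\frac{3\pi}{2}$, so the kernel is bounded above by a negative constant uniformly over the sector; integrating against the positive zero distribution yields $h_P(\pi)=-\eta<0$.

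Finally I would convert the indicator bound into the stated pointwise estimate: for large $r$, $\log|f(-r)|\le \mathrm{Re}\,Q(-r)+m\log r+(h_P(\pi)+o(1))r^\rho\le -\frac{\eta}{2}\, r^\rho$, and since $\rho>p$ the right-hand side is $\le -c\, r^p$ once $r\ge R$ for any prescribed $c>1$; exponentiating gives $|f(-r)|\le\exp(-c r^p)$. The main obstacle is the rigorous justification of the indicator formula: the clean radial-to-angular factorization presumes regular (finite-type) growth of $P$, whereas only finite order is assumed, so the cancellation between the near zeros ($t_n\ll r$) and the far zeros ($t_n\gg r$) in the displayed sum must be controlled directly --- for instance through the logarithmic-potential representation of $\log|P|$ and an integration by parts using only $n(t)=O(t^{\rho+\epsilon})$ --- in order to secure the one-sided bound $\log|P(-r)|\le-\eta r^\rho$ (or even a weaker bound $-\eta r^{p'}$ with $p'>p$, which already suffices).
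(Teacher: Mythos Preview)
The paper does not prove this lemma at all; it is quoted from Kwon's 1996 Kodai paper and used as a black box, so there is no in-paper argument to compare against. That said, your outline is the right one, and the sign computation at its core is correct: with $p<\rho<p+1$ one has $\sin\pi\rho=(-1)^{p}\sin\pi(\rho-p)$, so $\sin\pi\rho<0$ for $p$ odd and $\sin\pi\rho>0$ for $p$ even, and in each parity the sector hypothesis pins $\rho|\phi|$ strictly inside $(0,\pi/2)$ (odd case) or $(\pi/2,3\pi/2)$ (even case), forcing $\cos\rho|\phi|/\sin\pi\rho<0$ uniformly, exactly as you argue.

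You have also correctly identified the only genuine gap. Levin's closed-form indicator for a canonical product of non-integral order presupposes an angular density for the zeros (completely regular growth), which is not part of the hypotheses, so the clean step ``$h_P(\pi)=$ integral of the negative kernel'' is not yet justified. The remedy is the one you already point to in your last paragraph and is essentially how Kwon proceeds: bound each Weierstrass summand $\log|E_p(-r/a_n)|$ directly. With $w=-r/a_n$ one has $\mathrm{Re}\,w^{k}=(-1)^{k}(r/t_n)^{k}\cos k\phi_n$, and the sector condition makes $(-1)^{p+1}\cos((p+1)\phi_n)>0$ (controlling the far zeros $t_n>r$ via $\log|E_p(w)|\approx -\mathrm{Re}\,w^{p+1}/(p+1)$) and $(-1)^{p}\cos(p\phi_n)<0$ (controlling the near zeros $t_n<r$ via the $k=p$ term), so every summand is negative with a uniform margin. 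Summing against the crude bound $n(t)=O(t^{\rho+\epsilon})$ then yields $\log|P(-r)|\le -\eta\,r^{p'}$ for some $p'>p$ and all large $r$, which, after absorbing the $O(r^{p})$ contribution of $z^{m}e^{Q(z)}$, gives $|f(-r)|\le\exp(-cr^{p})$ for every $c>1$. In short: keep your structure and sign analysis, but replace the indicator shortcut by the direct factor-by-factor estimate you sketch at the end.
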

The following lemma gives the property of an entire function with Fabry gap and can be found in \cite{long} and \cite{wuzheng}.
\begin{lemma}\label{l5}
Let $g(z)=\sum\limits_{n=0}^\infty a_{\lambda_n}z^{\lambda_n}$ be an entire function of finite order with Fabry gap, and $f(z)$ be an entire function with $\rho(f)\in (0,\infty)$. Then for any given $\epsilon \in (0, \rho(f))$, there exists a set $H \subset (1,+\infty)$ satisfying $\overline{logdense}(H) \geq \xi$, where $\xi \in (0, 1)$ is a constant such that for all $|z| = r \in H$, one has
$$\log M(r, h) > r^{\rho(f)-\epsilon}, \log m(r, g) > (1 - \xi)\log M(r, g),$$ where $M(r, h) = max\{ |h(z)| : |z| = r\}$ , $m(r, g) = min\{ |g(z)| :|z| = r\}$ and $M(r, g) = max\{ |g(z)| : |z| = r\}.$
\end{lemma}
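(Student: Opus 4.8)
The plan is to produce the two required estimates on two separate sets of $r$-values and then intersect them, so that the whole argument reduces to controlling logarithmic densities. Throughout I read the first inequality as $\log M(r,f)>r^{\rho(f)-\epsilon}$, since no function $h$ appears among the hypotheses of the statement and $f$ is the function whose order is $\rho(f)$; the symbol $M(r,h)$ is evidently a misprint for $M(r,f)$.

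First I would handle the estimate coming from $f$. Write $\phi(t)=\log\log M(e^{t},f)$, which is nondecreasing because $M(r,f)$ is, and note that $\rho(f)\in(0,\infty)$ means $\limsup_{t\to\infty}\phi(t)/t=\rho(f)$. Fix a sequence $t_{n}\to\infty$ with $\phi(t_{n})>(\rho(f)-\tfrac{\epsilon}{2})t_{n}$. Using monotonicity of $\phi$ on each interval $[t_{n},(1+\delta)t_{n}]$ with $\delta=\frac{\epsilon}{2(\rho(f)-\epsilon)}$, a short computation shows $\phi(t)>(\rho(f)-\epsilon)t$ throughout that interval; translating back to $r=e^{t}$ shows that
\[
E_{1}=\{\,r>1:\ \log M(r,f)>r^{\rho(f)-\epsilon}\,\}
\]
has upper logarithmic density at least $\xi:=\delta/(1+\delta)>0$ (here $\epsilon\in(0,\rho(f))$ guarantees $\delta>0$). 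This $\xi\in(0,1)$ will be the constant claimed in the lemma.

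Next I would bring in the Fabry gap hypothesis on $g$. The key ingredient is the classical minimum-modulus theorem for Fabry gap series, namely that $\lambda_{n}/n\to\infty$ forces $\log m(r,g)=(1+o(1))\log M(r,g)$ as $r\to\infty$ outside an exceptional set of zero logarithmic density. Consequently, for the $\xi$ fixed above, the set $E_{2}=\{\,r>1:\ \log m(r,g)>(1-\xi)\log M(r,g)\,\}$ satisfies $\overline{\log dens}(E_{2}^{c})=0$.

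Finally I would intersect. From the inclusion $E_{1}\subseteq(E_{1}\cap E_{2})\cup E_{2}^{c}$ and finite subadditivity of upper logarithmic density,
\[
\overline{\log dens}(E_{1}\cap E_{2})\ \geq\ \overline{\log dens}(E_{1})-\overline{\log dens}(E_{2}^{c})\ \geq\ \xi-0=\xi .
\]
Setting $H=E_{1}\cap E_{2}$ then yields a set with $\overline{\log dens}(H)\geq\xi$ on which both inequalities hold simultaneously. The main obstacle is precisely the Fabry gap minimum-modulus estimate quoted in the third step: the density bookkeeping in the other steps is elementary once that theorem is available, which is exactly why the statement is attributed to \cite{long} and \cite{wuzheng}.
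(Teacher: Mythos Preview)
The paper does not supply a proof of this lemma at all; it is stated as a preliminary result with the remark that it ``can be found in \cite{long} and \cite{wuzheng}.'' There is therefore nothing in the paper to compare your argument against.

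That said, your outline is essentially the standard derivation appearing in those references. The three ingredients you isolate are exactly the right ones: (i) a monotonicity--plus--limsup argument on $\phi(t)=\log\log M(e^{t},f)$ to produce, for each $t_{n}$ realising the limsup, an interval in the $\log r$ scale on which $\log M(r,f)>r^{\rho(f)-\epsilon}$, giving $\overline{\log dens}(E_{1})\ge\delta/(1+\delta)=:\xi$; (ii) the Fabry-gap minimum-modulus theorem, which yields $\log m(r,g)>(1-\xi)\log M(r,g)$ for every fixed $\xi>0$ off a set of logarithmic density zero; and (iii) the subadditivity inequality $\overline{\log dens}(E_{1})\le\overline{\log dens}(E_{1}\cap E_{2})+\overline{\log dens}(E_{2}^{c})$ to intersect. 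Your reading of $M(r,h)$ as a misprint for $M(r,f)$ is consistent with how the lemma is invoked later in the paper (only the second inequality on $g$ is actually used in the proof of Theorem~2). The one place to be careful when writing this up in full is to cite a version of the Fabry-gap theorem in which the exceptional set genuinely has zero \emph{upper} logarithmic density (equivalently, finite logarithmic measure suffices), since step~(iii) needs $\overline{\log dens}(E_{2}^{c})=0$ and not merely $\underline{\log dens}(E_{2}^{c})=0$.
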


 \section{Proof of main theorems}
Before giving the proof of our results we state and prove a lemma which will be used in the proof of Theorem 1.
\begin{lemma}\label{l6}
Let $f(z)$ be an entire function satisfying $T(r,f)\sim\log M(r,f)$ in a set $E$ of positive upper logarithmic density. For given $0<c<\frac{1}{4}$ and $r\in E$, the set $$I_r=\{\theta\in [0,2\pi): \log |f(re^{\iota\theta})|\leq (1-c)\log M(r,f)\}$$ has linear measure zero. 
\end{lemma}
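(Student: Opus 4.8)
The plan is to reduce the statement to the classical identity $T(r,f)=\frac{1}{2\pi}\int_{0}^{2\pi}\log^{+}|f(re^{\iota\theta})|\,d\theta$ — valid since $f$ is entire, so the counting term in the Nevanlinna characteristic vanishes — together with the elementary pointwise bound $\log^{+}|f(re^{\iota\theta})|\le\log M(r,f)$ holding on the whole circle $|z|=r$.

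First I would fix $r\in E$ large enough that $M(r,f)\ge e$, and abbreviate $L=\log M(r,f)>0$. Since $\log^{+}|f(re^{\iota\theta})|\le L$ for every $\theta$, the function $\theta\mapsto L-\log^{+}|f(re^{\iota\theta})|$ is non-negative, and integrating gives
$$\frac{1}{2\pi}\int_{0}^{2\pi}\bigl(L-\log^{+}|f(re^{\iota\theta})|\bigr)\,d\theta=L-T(r,f).$$
On the set $I_{r}$ one has $\log|f(re^{\iota\theta})|\le(1-c)L$, and since $(1-c)L\ge0$ this forces $\log^{+}|f(re^{\iota\theta})|\le(1-c)L$ as well, so the integrand is $\ge cL$ throughout $I_{r}$. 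Discarding the rest of the integral,
$$\frac{cL}{2\pi}\,m(I_{r})\le\frac{1}{2\pi}\int_{I_{r}}\bigl(L-\log^{+}|f(re^{\iota\theta})|\bigr)\,d\theta\le L-T(r,f),$$
and therefore
$$m(I_{r})\le\frac{2\pi}{c}\cdot\frac{\log M(r,f)-T(r,f)}{\log M(r,f)}.$$

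By hypothesis $T(r,f)\sim\log M(r,f)$ as $r\to\infty$ along $E$, so the quotient on the right is $o(1)$ along $E$; hence $m(I_{r})\to0$ as $r\to\infty$ in $E$, which is the conclusion sought (and which is all that the proof of Theorem~1 requires; only $c<1$ is actually used here, the sharper restriction $c<\tfrac14$ being reserved for the later application). The computation is entirely routine: the one point needing care is the passage from $\log$ to $\log^{+}$, legitimate exactly because $(1-c)L\ge0$, and the observation that $T(r,f)\sim\log M(r,f)$ must be used in the strong form $\log M(r,f)-T(r,f)=o(\log M(r,f))$ so that dividing by $\log M(r,f)$ still yields a vanishing bound. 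The only genuinely subtle issue is interpretational — for an individual $r$ whose circle $|z|=r$ happens to pass near a zero of $f$ the set $I_{r}$ has positive measure — so the phrase ``linear measure zero'' is to be understood as ``linear measure tending to $0$ as $r\to\infty$ in $E$'', which is what the estimate above delivers.
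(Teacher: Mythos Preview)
Your argument is correct and follows essentially the same idea as the paper: both express $T(r,f)$ as the average of $\log^{+}|f|$ over the circle, split the integral over $I_r$ and its complement, and compare with $\log M(r,f)$. Your presentation is in fact the cleaner of the two---the paper's version drops a factor of $m(I_r)/(2\pi)$ when bounding the integral over $I_r$, which makes its final inequality $m(I_r)\le 2\pi(1-\epsilon)$ too weak to give the desired conclusion---and you are right that the intended statement is $m(I_r)\to 0$ along $E$, which is exactly what the proof of Theorem~1 invokes.
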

\begin{proof}
Since $$\lim\limits_{r\to\infty}\frac{T(r,f)}{\log M(r,f)}=1$$ in a set $E$ satisfying $\overline{logdens(E)}>0$. Therefore for given $0<\epsilon <c$, we have $$\displaystyle{1-(c-\epsilon)\leq\frac{T(r,f)}{\log M(r,f)}\leq 1+(c-\epsilon)}$$ for all $r>R(\epsilon )$ and $r\in E$. Let $\overline{I_r}$ denotes the complement of $I_r$, then
\begin{align*}
m(r,f)&=\frac{1}{2\pi}\int\limits^{2\pi}_0 \log^+|f(re^{\iota\theta})|d\theta\\
&=\frac{1}{2\pi}\int\limits_{I_r} \log^+|f(re^{\iota\theta})|d\theta+\frac{1}{2\pi}\int\limits_{\overline{I_r}} \log^+|f(re^{\iota\theta})|d\theta\\
&\leq  (1-c)\log^+M(r,f)+\log^+M(r,f)\frac{m(\overline{I_r})}{2\pi}
\end{align*}
Since $f(z)$ is an entire function therefore
$$1-(c-\epsilon)\leq\frac{T(r,f)}{log^+M(r,f)}\leq (1-c)+\frac{m(\overline{I_r})}{2\pi}$$ for all $r>R(\epsilon)$ and $r\in E$. This implies that $$2\pi\epsilon\leq m(\overline{I_r})$$ for all $r>R(\epsilon)$ and $r\in E$. Since $$m(I_r)+m(\overline{I_r})=2\pi$$ this gives $$m(I_r)\leq2\pi(1-\epsilon)\leq\epsilon_1$$ for all $r>R(\epsilon_1)$ and $r\in E$, where $\epsilon_1>0$ and $\epsilon>(1-\frac{\epsilon_1}{2\pi})$.
 \end{proof}
Now we are prepared to give the proof of the theorems.
\\
\begin{center}
\textbf{Proof of Theorem 1:}
\end{center}

\begin{proof}
Let us suppose that solution $f$ of \eqref{eq1} is of finite order. For given $0<c<\frac{1}{4}$, let $$I_r=\{\theta\in [0,2\pi): \log |B(re^{\iota\theta})|\leq (1-c)\log M(r,B)\}.$$ Since, $B(z)$ satisfies $T(r,B)\sim\log M(r,B)$ in a set $E$ satisfying $\overline{\log dens}(E)>0$, using Lemma \ref{l5} we have $m(I_r)\to0$ as $r\to\infty$ and $r\in E$. Using Lemma \ref{l1}, for given $\xi$, $0<\delta<1/2$ and choosing $\delta$ so small that $K(\rho,\xi)(\delta\log\frac{1}{\delta})<c$, we have 
$$r\int\limits_J\left|\frac{B'(re^{\iota\theta})}{B(re^{\iota\theta})}\right|d\theta < cT(r,f)$$
where $K(\rho,\xi)$ is a constant and $r\in E'$ satisfying $\underline{\log dens}(E')>1-\xi$, for every interval $J$ of length $\delta$. If $\theta,\theta'\in [0,2\pi)/I_r$, $|\theta-\theta'|<\delta$ and for all sufficiently large $r\in E\cap E'$, we have
 \begin{align*}
 |B(re^{\iota\theta})|&= \log|B(re^{\iota\theta'})|+\int\limits_{\theta'}^{\theta}\frac{d}{d\theta}\log|B(re^{\iota\theta})|d\theta\\
&> (1-c)\log M(r,B)-r\int\limits_{\theta'}^{\theta}\left|\frac{B'(re^{\iota\theta})}{B(re^{\iota\theta})}\right|d\theta \\
& > (1-2c)\log M(r,B).
 \end{align*}
Using Lemma \ref{l2}(a), we have 
\begin{equation}\label{p11}
\left|\frac{f^{(k)}(re^{\iota \theta})}{f(re^{\iota \theta})}\right| \leq r^{k\rho(f)} 
\end{equation}
for $\theta\in[0,2\pi)/E''$, where $E''$ is a set with linear measure $0$ and $r>R(\theta)$. Choosing $\theta\in[0,2\pi)/E'''$ such that $\delta(P,\theta)<0$, where $E'''$ is a set with linear measure $0$ we have
\begin{equation}\label{p12}
|A(re^{\iota\theta})| \leq exp ((1 - \epsilon)\delta(P, \theta)r^n)
\end{equation} 
for $r > R$.

From equation \eqref{eq1}, \eqref{p11} and \eqref{p12} for $r>R(\theta)$ such that $r\in E\cap E'$, $\theta\in[0,2\pi)/I_r\cup E''\cup E'''$ and $\delta(P,\theta)<0$, we have
$$|B(z)|\leq\left|\frac{f''(re^{\iota\theta})}{f(re^{\iota\theta})}\right|+|A(z)|\left|\frac{f'(re^{\iota\theta})}{f(re^{\iota\theta})}\right|$$
$$M(r,B)^{1-2c}<|B(z)|\leq (1+o(1))r^{2\rho(f)}$$
$$M(r,B)<(1+o(1))r^{4\rho(f)}$$
Thus, $\displaystyle{\frac{\log M(r,B)}{\log r}}$ converges to a finite number as $r\to\infty$, which is a contradiction for transcendental entire function.

\end{proof}

\begin{center}
\textbf{Proof of Theorem 2:}
\end{center}

\begin{proof}
Let us suppose $f$ be a solution of \eqref{eq1} of finite order. Then by Lemma \ref{l2} (c) for $r>R$ and $r\not\in G$ where $G\in[0,\infty)$ is a set of finite linear measure, we have 
\begin{equation}\label{p21}
\left|\frac{f^{(k)}(re^{\iota \theta})}{f(re^{\iota \theta})}\right| \leq r^{k\rho(f)}
\end{equation}
Let us rotate the axes of the complex plane, assume that all the zeros of $A(z)$ have their arguments in the set $S(p,\epsilon)$ defined in Lemma 4 for some $\epsilon>0$. Hence, by Lemma 4, there exists a positive real number $R$ such that for all $r>R$, we have
\begin{equation}\label{p22}
\min_{|z|=r}|A(z)|\leq|A(-r)|\leq\exp(-cr^p)< 1.
\end{equation}

Since $B(z)$ has Fabry gap, by Lemma \ref{l5} there exist a set $H \subset (1,+\infty)$ satisfying $\overline{\log dens}(H) \geq \xi$, where $\xi \in (0, 1)$ is a constant such that for all $|z| = r \in H$. \begin{equation}\label{p23}
(1-\xi)\log M(r,B)<\log m(r,B)<\log |B(z)|
\end{equation}
From equation \eqref{eq1}, we get
$$|B(z)|\leq\left|\frac{f''(re^{\iota\theta})}{f(re^{\iota\theta})}\right|+|A(z)|\left|\frac{f'(re^{\iota\theta})}{f(re^{\iota\theta})}\right|$$
From \eqref{p21}, \eqref{p22} and \eqref{p23}, for $r\in H/G$,  $r>R$ and $\theta\in\{\theta:\min_{|z|=r}|A(z)|=|A(z)|\}$, we have 
$$M(r,B)\leq r^{4\rho(f)}$$ which is a contradiction for transcendental entire function $B(z)$.
\end{proof}

\textbf{Acknowledgement} We are thankful to Dr. Dinesh Kumar for reading the manuscript and suggesting some changes.


\begin{thebibliography}{}
\bibitem{banklainelangley}
S. Bank, I. Laine and J. Langley, On the frequency of zeros of solutions of second order linear differential equation, Results Math., 10 ( 1986), 8-24.
\bibitem{fuchs}
W. Fuchs, Proof of a conjecture of G. Polya concerning gap series, Illinois J. Math., 7 (1963), 661-667.
\bibitem{gundersen}
G. G. Gundersen, Estimates for the Logarithmic Derivative of a Meromorphic Function, J. London Math. Soc., 37,17:1 (1988) 88-104.
\bibitem{gundersen2}
G.G. Gundersen, Finite order solutions of second order linear differential equations, Trans. Amer. Math. Soc. 305 (1988), 415-429.
\bibitem{heller}
S. Hellerstein, J. Miles and J. Rossi, On the growth of solutions of $f'' +gf' +hf = 0$, Trans. Amer. Math. Soc. 324 (1991), 693-706.
\bibitem{kwon}
K. Kwon, Nonexistence of finite order solutions of certain second order linear differential equations, Kodai Math. J. 19 (1996), 378—387.
\bibitem{kwonkim}
K. Kwon and J. Kim, Maximum modulus, characteristic, deficiency and growth of solutions of second order linear differential equations, Kodai Math. J. 24 (2001) 344–351.
\bibitem{kumarsaini}
S. Kumar and M. Saini, On zeros and growth of solutions of second
order linear differential equations, Commun. Korean Math. Soc. 35 (2020), No. 1,  229–241.
\bibitem{lainewu}
I. Laine and P. Wu, Growth of solutions of second order linear differential equations, Proc. Amer. Math. Soc. 128 (2000) 2693–2703.
\bibitem{long}
J. R. Long, Growth of solutions of second order complex linear differential equations with entire coefficients, Filomat., 32 (2018), 275-284.
\bibitem{murai}
T. Murai, The deficiency of entire functions with Fejer gaps, Ann. Inst. Fourier (Grenoble) 33 (1983), 39-58.
\bibitem{wanglaine}
J. Wang and I. Laine, Growth of solutions of second order linear differential equations, J. Math. Anal. Appl. 342(2008), 39-51.
\bibitem{wuzheng}
S. Z. Wu and X. M. Zheng, On meromorphic solutions of some linear differential equations with entire coefficients being Fabry gap series, Adv. Difference Equ., 2015:32 (2015), 13pp.
\end{thebibliography}
\end{document}